\newcommand{\seqnum}[1]{\href{http://oeis.org/#1}{\underline{#1}}}
\theoremstyle{plain}
\newtheorem{theorem}{Theorem}
\newtheorem{corollary}[theorem]{Corollary}
\newtheorem{proposition}[theorem]{Proposition}
\theoremstyle{definition}
\begin{document}

\newcommand{\stirlingone}[2]{\left[{#1 \atop #2}\right]}
\newcommand{\stirlingtwo}[2]{\left\{{#1 \atop #2}\right\}}
\newcommand{\eulereantwo}[2]{\left < \!\! \left < {#1 \atop #2} \right > \!\!\right >}
\newcommand{\Eulerian}[2]{\genfrac{<}{>}{0pt}{}{#1}{#2}}

\begin{center}
\vskip 1cm{\LARGE\bf 
Identities and Generating Functions of Products of Generalized Fibonacci numbers, Catalan and Harmonic Numbers
}
\vskip 1cm
\large
Vladimir V. Kruchinin\\
Tomsk State University of Control Systems and Radioelectronics\\
Tomsk \\
Russian Federation\\ 
\href{mailto:kru@ie.tusur.ru}{\tt kru@ie.tusur.ru}

\vskip 1cm
Maria Y. Perminova\\
Tomsk State University of Control Systems and Radioelectronics\\
Tomsk \\
Russian Federation\\ 
\href{mailto:pmy@fdo.tusur.ru}{\tt pmy@fdo.tusur.ru}
\end{center}

\vskip .2 in

%\maketitle

\begin{abstract}
We considered the properties of generalized Fibonacci and Lucas numbers class. The analogues of well-known Fibonacci identities for generalized numbers are obtained. We gained a new identity of product convolution of generalized Fibonacci and Lucas numbers. We wrote down generating functions of generalized Fibonacci and Lucas numbers products, their multisections, harmonic numbers and Catalan numbers 
\end{abstract}

\section{Introduction}

The Fibonacci, Lucas, and Catalan numbers are among the most studied objects of combinatorics. There is a huge variety of publications and monographs \cite{Koshy, Andrica, Honsberger, Stanley}. Studies on the products of such numbers, e.g., Fibonacci and Catalan numbers product have appeared relatively recently \cite{Barry}. There are many different generalizations of Fibonacci and Lucas numbers \cite{Bicknell, Dujella, Horadam, Falcon, Edson, Simsek}. Kalman and R. Mena proposed a generalization of Fibonacci numbers of the form 

\begin{equation}\label{LFiba}
F(n)=\left\{
\begin{array}{ll}
0, & n\le 0,\\
1, & n=1,\\
a\,F(n-1)+b\,F(n-2), & n>1.\\
\end{array}
\right. 
\end{equation}
where  $a, b\in \mathbb R$.  
The generating function for generalized Fibonacci numbers is
$$
F(x)=\frac{x}{1-a\,x-b\,x^2}.
$$
They also introduced generalized Lucas numbers
$$
L(n)=F(n+1)+b\,F(n-1).
$$
The generating function for generalized Lucas numbers is
$$
L(x)=\frac{2-a\,x}{1-a\,x-b\,x^2}.
$$
They presented the identity accomplished for generalized Fibonacci numbers
\begin{equation}\label{Equa1}
F(n+m)=b\,F(n-1)F(m)+F(n)\,F(m+1).
\end{equation}

We note if $a, b\in \mathbb Z$ are integers. Then $F(n)$ and $L(n)$ are  Lucas sequences \cite{Lehmer}. When setting the parameters values $a$ and $b$, we obtain a particular sequence. Then fixing the parameters $a$ and $b$, we explicitly indicate their value, e.g.,  $a=1$, $b=1$, $F_{1,1}(n)$ are Fibonacci numbers,  $L_{1,1}(n)$ are Lucas numbers, $a=2$, $b=1$ $F_{2,1}(n)$ are Pell numbers and $L_{2,1}(n)$ are Pell-Lucas numbers.
Table \ref{TablOeis} presents the sequences $F(n)$ and $L(n)$ for different values $a$ and $b$, written in OEIS \cite{OEIS}.
\begin{table}\label{TablOeis}
\begin{center}
\begin{tabular}{llcc}\hline
$a$ & $b$ & $F_{a,b}(n)$ & $L_{a,b}(n)$ \\ \hline
1 &1& \seqnum{A000045} & \seqnum{A000032}\\
2 & 1& \seqnum{A000129} & \seqnum{A002203}\\
3 &1& \seqnum{A006190} & \seqnum{A006497}\\
4 &1& \seqnum{A001076} & \seqnum{A014448}\\
5 &1& \seqnum{A052918} & \seqnum{A087130}\\
6 &1& \seqnum{A005668} & \seqnum{A085447}\\
7 &1& \seqnum{A054413} & \seqnum{A086902}\\
8 &1& \seqnum{A041025} & \seqnum{A086902}\\
9 &1& \seqnum{A099371} & \seqnum{A087798}\\
1 &2& \seqnum{A001045} & \seqnum{A014551}\\
1 &3&\seqnum{A006130} & \seqnum{A075118}\\
2 &2& \seqnum{A002605} & \seqnum{A080040}\\
2 &3& \seqnum{A015518} & \seqnum{A102345}\\
3 &2& \seqnum{A007482} & \seqnum{A206776}\\
3 &3& \seqnum{A030195} & \seqnum{A172012}\\
\end{tabular}\caption{Generalized Fibonacci and Lucas numbers written in OEIS}
\end{center}
\end{table}

We write down a number of identities to obtain generating functions in the following sections.
\begin{proposition}
We accomplish the identity for generalized Fibonacci and Lucas numbers
\begin{equation}\label{Prop1}
2F(n+m)=F(m)L(n)+L(m)F(n).
\end{equation}
\end{proposition}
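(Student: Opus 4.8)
The plan is to start from the right-hand side of \eqref{Prop1} and reduce it to $2F(n+m)$ by eliminating every Lucas term in favor of Fibonacci numbers, then recognizing the result as two copies of the Kalman--Mena identity \eqref{Equa1}. First I would invoke the definition $L(n)=F(n+1)+b\,F(n-1)$, together with the analogous expression for $L(m)$, to rewrite the combination $F(m)L(n)+L(m)F(n)$ entirely in terms of $F$. Expanding the two products yields the four-term sum
$$F(m)F(n+1)+b\,F(m)F(n-1)+F(n)F(m+1)+b\,F(n)F(m-1).$$

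The key step is to observe that these four terms split into two groups, each of which is exactly an instance of \eqref{Equa1}. Pairing $F(n)F(m+1)+b\,F(n-1)F(m)$ matches the stated form $F(n+m)=b\,F(n-1)F(m)+F(n)F(m+1)$ directly and collapses to $F(n+m)$. Pairing the remaining two terms $F(m)F(n+1)+b\,F(m-1)F(n)$ is the same identity with the roles of $m$ and $n$ interchanged, so, using the symmetry $F(m+n)=F(n+m)$, it also collapses to $F(n+m)$. Summing the two groups gives $F(n+m)+F(n+m)=2F(n+m)$, which is the claimed identity.

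I expect no genuine obstacle here: the argument is a single substitution followed by two applications of a previously established result. The one point requiring care is the bookkeeping of which factor carries the coefficient $b$ after expansion, so that each of the two pairs is matched against the precise template of \eqref{Equa1}; exploiting the symmetric dependence of $F(n+m)$ on $n$ and $m$ is what lets both pairings fit that single template. (A generating-function argument starting from $L(x)=(2-a\,x)/(1-a\,x-b\,x^2)$ and $F(x)=x/(1-a\,x-b\,x^2)$ is also available, but the coefficient-comparison route above is shorter and more transparent.)
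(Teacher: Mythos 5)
Your proposal is correct and is essentially the paper's own proof run in reverse: the paper adds the two instances of \eqref{Equa1} (with the roles of $n$ and $m$ swapped) and then groups terms into $L(n)$ and $L(m)$, whereas you expand $L(n)$ and $L(m)$ by definition and then recognize the same two instances of \eqref{Equa1}. The algebraic content is identical, so no further comment is needed.
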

\begin{proof}
We use the identity (\ref{Equa1})
$$
F(n+m)=b\,F(n-1)F(m)+F(n)\,F(m+1).
$$
and
$$
F(n+m)=b\,F(n)F(m-1)+F(n+1)\,F(m).
$$
We add
$$
2F(n+m)=b\,F(n)F(m-1)+F(n+1)\,F(m)+b\,F(n-1)F(m)+F(n)\,F(m+1),
$$
$$
2F(n+m)=[b\,F(n)F(m-1)+F(n)\,F(m+1])+[F(n+1)\,F(m)+b\,F(n-1)F(m)].
$$
We group
$$
2F(n+m)=F(n)[b\,F(m-1)+F(m+1])+\,F(m)[F(n+1)+b\,F(n-1)].
$$
We make a replacement
$$
L(n)=F(n+1)+b\,F(n-1).
$$
We obtain the sought-for formula
$$
2F(n+m)=F(m)L(n)+L(m)F(n).
$$
\end{proof}

The consequence of this statement at $n=m$ is the identity
\begin{equation}\label{LF2}
F(2\,n)=F(n)L(n).
\end{equation}
We accomplish Johnson's identity for generalized Fibonacci numbers (2003) \cite{Johnson}.
\begin{proposition}
We accomplish the identity for all $p,q,c,d\in \mathbb N$, $r\in \mathbb Z$, at $p+q=c+d$
\begin{equation}\label{Prop2}
F(p)F(q)-F(c)F(d)=(-b)^r\left(F(p-r)F(q-r)-F(c-r)F(d-r)\right). 
\end{equation}
\end{proposition}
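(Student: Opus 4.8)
The plan is to reduce the general statement to the single-step case $r=1$ and then iterate. Since the hypothesis is $p+q=c+d$, shifting all four arguments down by the same amount preserves this sum condition, so it suffices to establish the one-step identity
$$F(p)F(q)-F(c)F(d)=-b\bigl(F(p-1)F(q-1)-F(c-1)F(d-1)\bigr)$$
whenever $p+q=c+d$, and afterwards apply it repeatedly.

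The key observation for this base case is that the quantity $F(x)F(y)+b\,F(x-1)F(y-1)$ depends only on the sum $x+y$. Indeed, substituting $n=x$ and $m=y-1$ into the addition identity (\ref{Equa1}) gives
$$F(x+y-1)=b\,F(x-1)F(y-1)+F(x)F(y).$$
Hence both $F(p)F(q)+b\,F(p-1)F(q-1)$ and $F(c)F(d)+b\,F(c-1)F(d-1)$ equal $F(p+q-1)=F(c+d-1)$; subtracting the two expressions yields exactly the one-step identity above, which is the $r=1$ instance of the proposition. I expect choosing the right $n,m$ so that the product terms line up to be essentially the only conceptual content of the argument.

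For $r\ge 1$ I would write $D_k=F(p-k)F(q-k)-F(c-k)F(d-k)$. Applying the one-step identity to the shifted arguments $p-k,\dots,d-k$ (whose two sums remain equal) gives $D_k=-b\,D_{k+1}$ for every $k\ge0$, and a straightforward induction then produces $D_0=(-b)^r D_r$, which is the claim. For negative $r=-s$ with $s>0$ I would not invert the recurrence but instead relabel: the already-proven positive case, applied to the arguments $p+s,q+s,c+s,d+s$ (all still in $\mathbb N$ and of equal sum) with shift $s$, gives $F(p+s)F(q+s)-F(c+s)F(d+s)=(-b)^s\bigl(F(p)F(q)-F(c)F(d)\bigr)$, and this rearranges to the desired identity for $r=-s$.

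The remaining difficulties are bookkeeping rather than conceptual. One must check that the addition identity (\ref{Equa1}) and the resulting one-step relation stay valid once some shifted index drops to $0$ or below, where $F$ vanishes by its definition, so that the iteration is legitimate across that boundary. One must also interpret the factor $(-b)^r$ consistently: the negative-$r$ statement is only meaningful when $b\neq0$, which is precisely why I route that case through the relabeling above rather than through division by $-b$.
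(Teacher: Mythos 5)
Your proof is correct and follows essentially the same route as the paper: both substitute the right arguments into identity (\ref{Equa1}) to see that $F(x)F(y)+b\,F(x-1)F(y-1)=F(x+y-1)$ depends only on $x+y$, equate the two instances using $p+q=c+d$ to get the one-step relation, and iterate it. Your explicit handling of negative $r$ by relabeling and of indices dropping to zero is more careful than the paper, which simply applies the one-step identity ``$r$ times'' without comment, but this is a refinement of the same argument rather than a different one.
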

\begin{proof}

We use the identity (\ref{Equa1})
$$
F(p+q-1)=b\,F(p-1)F(q-1)+F(p)\,F(q),
$$
$$
F(c+d-1)=b\,F(c-1)F(d-1)+F(c)\,F(d).
$$
Then
$$
b\,F(p-1)F(q-1)+F(p)\,F(q)=b\,F(c-1)F(d-1)+F(c)\,F(d).
$$
We transform to
$$
F(p)\,F(q)-F(c)\,F(d)=-b\left(F(p-1)F(q-1)-F(c-1)F(d-1)\right).
$$
Applying the identity $r$ times for the right side, we obtain the sought-for identity 
$$
F(p)F(q)-F(c)F(d)=(-b)^r\left(F(p-r)F(q-r)-F(c-r)F(d-r)\right). 
$$
\end{proof}

We accomplish the multiple-angle recurrence identity for generalized Fibonacci numbers. 
\begin{proposition}
We accomplish the identity for all $a,b\in \mathbb R$
\begin{equation}\label{Prop3}
F(m\,n+j)=L(m)F(m(n-1)+j)-(-b)^m\,F(m(n-2)+j),
\end{equation} 
where $L(m)$ are generalized Fibonacci numbers.
\end{proposition}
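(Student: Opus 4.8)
The plan is to turn the claimed three-term relation into a statement about a single shifted index and then reduce it to a d'Ocagne-type product identity that follows from Johnson's identity (\ref{Prop2}). First I would set $N=m(n-1)+j$, so that $mn+j=N+m$ and $m(n-2)+j=N-m$, and the target (\ref{Prop3}) becomes
\[
F(N+m)=L(m)\,F(N)-(-b)^m\,F(N-m).
\]
Applying the addition identity (\ref{Equa1}) with $n\mapsto N$ gives $F(N+m)=b\,F(N-1)F(m)+F(N)\,F(m+1)$, and substituting $L(m)=F(m+1)+b\,F(m-1)$ into the right-hand side shows that the whole proposition is equivalent to the single product identity
\[
b\bigl(F(N-1)F(m)-F(N)F(m-1)\bigr)=-(-b)^m\,F(N-m).
\]

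Second I would prove the bracketed d'Ocagne-type identity $F(N-1)F(m)-F(N)F(m-1)=(-b)^{m-1}F(N-m)$. Both products have index sum $N+m-1$, so Johnson's identity (\ref{Prop2}) applies with $p=N-1$, $q=m$, $c=N$, $d=m-1$. Choosing $r=m-1$ collapses the right-hand side to $(-b)^{m-1}\bigl(F(N-m)F(1)-F(N-m+1)F(0)\bigr)$, and the initial values $F(0)=0$, $F(1)=1$ leave exactly $(-b)^{m-1}F(N-m)$. Multiplying by $b$ and using the elementary relation $b(-b)^{m-1}=-(-b)^m$ then yields the displayed equation above, which completes the proof.

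The main thing to watch is the index bookkeeping: I must keep $N-m=m(n-2)+j\ge 0$ so that the value $F(N-m)$ really is governed by the recurrence and by the conventions $F(0)=0$, $F(1)=1$, rather than by any extension to negative arguments (for which the zero convention of (\ref{LFiba}) and the Binet-type formula disagree). For the admissible ranges of $n$ and $j$ this is automatic, and the small-$n$ boundary cases can be checked directly against the conventions. As an independent sanity check I would verify everything through the Binet form: writing $\alpha,\beta$ for the roots of $x^2-ax-b=0$, one has $\alpha^m+\beta^m=L(m)$ and $\alpha^m\beta^m=(-b)^m$, so $\alpha^m$ and $\beta^m$ are the roots of $x^2-L(m)x+(-b)^m=0$; since $F(mn+j)=\frac{\alpha^j(\alpha^m)^n-\beta^j(\beta^m)^n}{\alpha-\beta}$ is a linear combination of the $n$-th powers of these two roots, it must satisfy precisely the recurrence (\ref{Prop3}). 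I expect the only genuine obstacle to be phrasing the reduction cleanly so that a single application of (\ref{Prop2}) does all the work.
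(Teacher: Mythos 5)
Your proof is correct and follows essentially the same route as the paper: both arguments combine Johnson's identity (\ref{Prop2}) — specialized with an $r$ that collapses two of the four factors to $F(1)=1$ and $F(0)=0$ — with the addition formula (\ref{Equa1}) and the definition $L(m)=F(m+1)+b\,F(m-1)$. The only differences are cosmetic (you take $r=m-1$ and phrase the reduction as an equivalence with a d'Ocagne-type lemma, while the paper takes $r=m$ and instead adds $b\,F(m(n-1)+j)F(m-1)$ to both sides), so the mathematical content is identical.
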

\begin{proof}

We write down generalized Johnson's identity (\ref{Prop2}) $p=m(n-1)+j$, $q=m+1$, $c=m(n-1)+j+1$, $d=m$, $r=m$. 
We obtain
$$
F(m(n-2)+m+j)F(m+1)-F(m(n-2)+m+j+1)F(m)=(-b)^mF(m(n-2)+j),
$$
$$
F(m(n-2)+m+j+1)F(m)=F(m(n-2)+m+j)F(m+1)-(-b)^mF(m(n-2)+j).
$$
We add to the left and to the right parts
$$
bF(m(n-1)+j)F(m-1).
$$
We obtain
\begin{eqnarray}\nonumber
&bF(m(n-1)+j)F(m-1)+F(m(n-1)+j+1)F(m)=\\ 
\nonumber
&F(m(n-1)+j)F(m+1)+b\,F(m(n-1)+j)F(m-1)-(-b)^mF(m(n-2)+j).
\end{eqnarray}
Whence the left part of the identity is equal to $F(mn+j)$. In the right part the first two summands are equal  to $L(m)F(m(n-1)+j)$. Then we obtain the sought-for recurrence formula
$$
F(mn+j)=L(m)F(m(n-1)+j)-(-b)^mF(m(n-2)+j).
$$
\end{proof}

\begin{proposition}
We accomplish the identity for generalized Fibonacci numbers
\begin{equation}\label{Prop4}
F(n+j)F(m)-F(j)F(n+m)=(-1)^j\,b^jF(n)F(m-j).
\end{equation}
\end{proposition}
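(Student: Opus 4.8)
The plan is to recognize this identity as a direct specialization of the generalized Johnson identity (\ref{Prop2}) already established above, rather than unwinding the recurrence (\ref{Equa1}) from scratch. The left-hand side $F(n+j)F(m)-F(j)F(n+m)$ has exactly the shape $F(p)F(q)-F(c)F(d)$, and the crucial observation is that the balance condition $p+q=c+d$ required by (\ref{Prop2}) is satisfied here: setting $p=n+j$, $q=m$, $c=j$, $d=n+m$ gives $p+q=n+j+m=c+d$.

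With this matching in hand, the next step is to choose the shift parameter $r$ in (\ref{Prop2}) so that the right-hand side collapses to a single term. Taking $r=j$ sends $p-r=n$, $q-r=m-j$, $c-r=0$, and $d-r=n+m-j$, so that (\ref{Prop2}) yields
$$F(n+j)F(m)-F(j)F(n+m)=(-b)^j\bigl(F(n)F(m-j)-F(0)F(n+m-j)\bigr).$$
The final step is to invoke the initial condition $F(0)=0$ from the definition (\ref{LFiba}), which annihilates the second term inside the parentheses and leaves $(-b)^jF(n)F(m-j)=(-1)^jb^jF(n)F(m-j)$, precisely the claimed formula.

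I expect essentially no serious obstacle in this argument: the only real content is spotting that $(n+j)+m=j+(n+m)$ licenses the application of (\ref{Prop2}), and that the particular choice $r=j$ is engineered precisely to produce the factor $F(0)$ that kills one of the two cross-products. If one preferred a self-contained derivation not relying on (\ref{Prop2}), the natural alternative would be induction on $j$, with the base case $j=0$ reducing to the tautology $F(n)F(m)=F(n)F(m)$ and the inductive step using (\ref{Equa1}); but the specialization of the already-proved Johnson identity is markedly shorter and avoids that bookkeeping, so that is the route I would take.
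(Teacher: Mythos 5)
Your proposal is correct and follows exactly the paper's own route: the paper likewise proves this by specializing the generalized Johnson identity (\ref{Prop2}) with $p=n+j$, $q=m$, $c=j$, $d=n+m$, $r=j$. You have in fact supplied one detail the paper leaves implicit, namely that the term $F(c-r)F(d-r)=F(0)F(n+m-j)$ vanishes because $F(0)=0$.
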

\begin{proof}

Using Johnson identity (\ref{Prop2}) at $p=n+j$, $q=m$, $c=j$, $d=n+m$, $r=j$ we obtain the sought-for identity
$$
F(n+j)F(m)-F(j)F(n+m)=(-1)^jb^jF(n)F(m-j).
$$
\end{proof}

\section{The identity for generalized Fibonacci and Lucas numbers}
We consider the property of numbers sequences $F(n)$ and $L(n)$.
Let we have some convolution of the form 
$$
p(n)=\sum_{i=0}^n r(i)\,r(n-i),
$$
where $r(n)\in R$.

We prove the following theorem. 
\begin{theorem}
We accomplish the identity for numbers sequences $F(n)$, $L(n)$  and $p(n)$, $k, m \in \mathbb N$ 
\begin{equation}\label{LIden2}
p(n)\,{F}(k\,n+2\,m)=\sum_{i=0}^n r(i){L}(k\,i+m)r(n-i){F}(k(n-i)+m).
\end{equation}
\end{theorem}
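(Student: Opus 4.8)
The plan is to evaluate the right-hand side of \eqref{LIden2} by exploiting the symmetry of the convolution weights $r(i)\,r(n-i)$ under the reflection $i\mapsto n-i$, and then to collapse the resulting symmetric sum by means of the mixed Fibonacci–Lucas identity \eqref{Prop1}.

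First I would denote the right-hand side by
$$S=\sum_{i=0}^n r(i)\,r(n-i)\,L(ki+m)\,F\bigl(k(n-i)+m\bigr).$$
Replacing the summation index $i$ by $n-i$, and using that $r(i)\,r(n-i)$ is unchanged under this reflection, gives a second representation of the same quantity,
$$S=\sum_{i=0}^n r(i)\,r(n-i)\,L\bigl(k(n-i)+m\bigr)\,F(ki+m).$$
Averaging the two forms yields
$$2S=\sum_{i=0}^n r(i)\,r(n-i)\Bigl[L(ki+m)\,F\bigl(k(n-i)+m\bigr)+L\bigl(k(n-i)+m\bigr)\,F(ki+m)\Bigr].$$

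The crucial step is to identify the bracketed factor as an instance of \eqref{Prop1}. Writing that identity with free arguments $\alpha,\beta$ as $2F(\alpha+\beta)=F(\beta)L(\alpha)+L(\beta)F(\alpha)$ and choosing $\alpha=ki+m$, $\beta=k(n-i)+m$, one has $\alpha+\beta=kn+2m$, so the bracket equals $2F(kn+2m)$ for every index $i$. Since this value no longer depends on $i$, it factors out of the sum and, recalling the definition $p(n)=\sum_{i=0}^n r(i)\,r(n-i)$, we obtain
$$2S=2F(kn+2m)\sum_{i=0}^n r(i)\,r(n-i)=2\,p(n)\,F(kn+2m).$$
Dividing by $2$ gives $S=p(n)\,F(kn+2m)$, which is exactly \eqref{LIden2}.

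I expect the symmetrization to be the only real obstacle. The summand on the right is not individually symmetric in $i$, so a direct term-by-term comparison with the left-hand side fails; one must first average the sum against its reflected copy so that the pair $L(ki+m)F(k(n-i)+m)$ and $L(k(n-i)+m)F(ki+m)$ appears together and \eqref{Prop1} can be applied cleanly. Everything after that recognition is a formal factoring of the $i$-independent constant $F(kn+2m)$ out of the convolution.
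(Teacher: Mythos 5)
Your proof is correct and follows essentially the same route as the paper: both exploit the symmetry of the weights $r(i)\,r(n-i)$ under $i\mapsto n-i$ and collapse the paired terms via identity \eqref{Prop1}. The only difference is one of execution, and it favors you: the paper pairs terms explicitly and must split on the parity of $n$, treating the self-paired middle term (when $n$ is even) as a separate case via \eqref{LF2}, whereas your reindex-and-average formulation handles every term uniformly, since \eqref{Prop1} with equal arguments $\alpha=\beta$ reduces to exactly that identity.
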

\begin{proof}

We obtain the identity (\ref{LIden2}) at $n=0$
$$
p(0)\,{F}(0+2\,m)=p(0)\,L(m)F(m).
$$
Whence we obtain the equality based on the identity (\ref{LF2}).
We now consider the set of pairs of terms in the right part (\ref{LIden2}) such that
$$
r(i)r(n-i)=r(n-i)r(i).
$$
Then for the sum
$$
r(i){L}(k\,i+m)r(n-i)F(k(n-i)+m)+r(n-i){L}(k(n-i)+m)r(i){F}(ki+m)
$$
we accomplish
$$
r(i)r(n-i)\left[{L}(ki+m){F}(k(n-i)+m)+{\it L}(k(n-i)+m){F}(k\,i+m)\right].
$$
We write down on the basis of the identity (\ref{Prop1}) 
$$
{L}(ki+m){F}(k(n-i)+m)+{L}(k(n-i)+m){F}(ki+m)=2F(ki+m+k(n-i)+m)=2\,{F}(kn+2m).
$$
The set of pairs of terms in the right part (\ref{LIden2}) at odd $n$ is even and the obtained identity is accomplished for all pairs. The set of pairs of terms in the right part (\ref{LIden2}) at even $n$ is odd. We consider $n=j+j$. Then
$$
r(j){L}(kj+m)r(j){F}(kj+m)=r(j)r(j){L}(kj+m){F}(kj+m).
$$
We obtain on the basis of the identity (\ref{LF2})
$$
{L}(kj+m){F}(kj+m)=F(2kj+2m)=F(kn+2\,m).
$$
Then we can write down the right part of the identity (\ref{LIden2}) as
$$
\sum_{i=0}^n r(i){\it L}(ki+m)r(n-i){F}(k(n-i)+m)={F}(kn+2m)\,\sum_{i=0}^n r(i)r(n-i).
$$
Knowing that
$$
p(n)=\sum_{i=0}^n r(i)r(n-i)
$$
whence we obtain
$$
p(n)\,F(kn+2\,m)=\sum_{i=0}^n r(i){\it L}(ki+m)r(n-i){\it F}(k(n-i)+m).
$$
We complete the proof.
\end{proof}

We consider the examples of using the identity. Since $r(n)=1$, and then the identity (\ref{LIden2}) is
$$
{(n+1)}F(kn+2m)=\sum_{i=0}^n {L(ki+m)}{F(k(n-i)+m)}.
$$
We obtain at $m=n$
$$
{(n+1)}F((k+1)\,n)=\sum_{i=0}^n {L(ki+n)}{F(k(n-i)+n)}.
$$
We now consider the case $r(n)=F(n)$. We substitute into the identity (\ref{LIden2})
$$
p(n)\,{F}(n+2\,m)=\sum_{i=0}^n F(i){L}(i+m)F(n-i){F}(n-i+m). 
$$
where
$$
p(n)=\sum_{i=0}^n F(i)F(n-i).
$$
Hoggart obtained the following explicit expression for the convolution of Fibonacci numbers \cite{HB}
$$
\sum_{i=0}^n F_{1,1}(i)F_{1,1}(n-i)=\frac{1}{5}\left((n-1)\,F_{1,1}(n)+(n+1)F_{1,1}(n-1)\right).
$$
Using his method we can obtain the convolution formula for generalized Fibonacci numbers
\begin{equation}\label{LComvo}
\sum_{i=0}^n F(i)F(n-i)=\frac{(n-1)F(n+1)+b\,(n+1)F(n-1)}{a^2+4\,b}.
\end{equation}
Whence at $m=0$, $k=1$, $a=1$, $b=1$ we obtain the identity
$$
\sum_{i=0}^nF_{1,1}(i)^2\,L_{1,1}(n-i)F_{1,1}(n-i)=F_{1,1}(n)\frac{(n-1)F_{1,1}(n+1)+(n+1)F_{1,1}(n-1)}{5}.
$$
For Pell numbers we have $m=0$, $k=1$, $a=2$, $b=1$
$$
\sum_{i=0}^nF_{2,1}(i)^2\,L_{2,1}(n-i)F_{2,1}(n-i)=F_{2,1}(n)\frac{(n-1)F_{2,1}(n+1)+(n+1)F_{2,1}(n-1)}{8}.
$$
For Jacobsthal numbers we have $m=0$, $k=1$, $a=1$, $b=2$
$$
\sum_{i=0}^n F_{1,2}(i)^2\,L_{1,2}(n-i)F_{1,2}(n-i)=F_{1,2}(n)\frac{(n-1)F_{1,2}(n+1)+(n+1)F_{1,2}(n-1)}{9}.
$$
We can replace the property $r(i)r(n-i)=r(n-i)r(i)$ by the property $T(n,k)=T(n,n-k)$ for a triangle $T(n,k)$, the triangle is symmetric in relation to main diagonal. Then the identity (\ref{LIden2}) is
$$
F(kn+2\,m)\sum_{k=0}^n T(n,k)=\sum_{k=0}^n T(n,k)F(k\,i+m)L(k\,(n-i)+m).
$$
We write down the identity for Pascal's triangle
$$
2^n\,F(kn+2\,m)=\sum_{k=0}^n \binom{n}{k}F(k\,i+m)L(k\,(n-i)+m).
$$
Similarly,
$$
\binom{2\,n}{n}\,F(kn+2\,m)=\sum_{k=0}^n \binom{n}{k}^2\,F(k\,i+m)L(k\,(n-i)+m).
$$
The identity for Euler numbers of the first kind $E_1(n,k)$
$$
(n+1)!\,F(kn+2\,m)=\sum_{k=0}^n E_1(n,k)F(k\,i+m)L(k\,(n-i)+m).
$$
Leibniz's harmonic triangle, Losanitsch's triangle, A027907 triangle of trinomial coefficients and others have such properties \cite{OEIS}.
The obtained identity (\ref{LIden2}) allows us to write down the functional equation of the form
$$
A(x)=B(x)\,C(x),
$$
where 
$$
A(x)=\sum_{n=0}^{\infty} p(n)\,F(kn+2\,m)x^n,
$$
$$
B(x)=\sum_{n=0}^{\infty} r(n)\,F(kn+m)x^n,
$$
$$
C(x)=\sum_{n=0}^{\infty} r(n)\,L(kn+m)x^n.
$$
Now knowing two out of the three functions we can find the third function. We consider obtaining generating functions based on this equation.

\section{Generating functions for the product of generalized Fibonacci and harmonic numbers}

We consider the case $r(n)=\frac{1}{n}$, $m=0$, $k=1$. Then the identity (\ref{LIden2}) is
$$
p(n)F_1(n)=\sum_{i=}^{n-1} \frac{L(i)}{(i)}\frac{F(n-i)}{(n-i)},
$$
where
$F(n)$ are generalized Fibonacci numbers, $L(n)$ are adjoined numbers
$$
p(n)=\sum_{i=1}^{n-1} \frac{1}{i(n-i)}=\frac{2}{n}{{{H}\left(n-1\right)}},
$$
where $H(n)$ are harmonic numbers.
We find the generating function for $p(n)F(n)$. We note that
$$
p(n)F(n)=[x^n]U_1(x)=[x^n]U_2(x)\,U_3(x),
$$
where 
$$
U_2(x)=\sum_{n=1}^{\infty}\frac{F(n)}{n}x^n,
$$
$$
U_3(x)=\sum_{n=1}^{\infty}\frac{L(n)}{n}x^n.
$$
Then 
$$U_2(x,a,b)=\int\frac{1}{(1-a\,x-b\,x^2)}dx=\frac{1}{{\sqrt{4\,b+a^2}}}{{\log \left({{-\sqrt{4\,b+a^2}\,x+a\,x-2}\over{\sqrt{4\,b+a^2}\,x+
 a\,x-2}}\right)}},$$
$$U_3(x,a,b)=\int \frac{2-a\,x}{(1-a\,x-b\,x^2)}dx= -\log \left(1-a\,x-b\,x^2\right).
$$
Then the sought-for generating function is
$$
U_1(x,a,b)=\frac{1}{{\sqrt{a^2+4\,b}}}{{\log\left({{2-\sqrt{a^2+4b}\,x-a\,x}\over{2+\sqrt{a^2+4b}\,x-a\,x}}\right)}}\log \left(1-a\,x-b\,x^2\right).
$$
And
$$
\frac{2}{n}{{{H}(n-1)\,{F}(n)}}=[x^n]U_1(x,a,b).
$$
Hence we can write down 
$$
H(n-1)F(n)=[x^n]{{{{x\,d\,U_1(x,a,b)}\over{2\,d\,x}}}}.
$$
Then the generating function for the product $H(n)F(n+1)$ is
$$U_4(x,a,b)={{\left((a+2\,b\,x)\,\log \left({{2-\sqrt{a^2+4b}\,x-a\,x}\over{2+\sqrt{a^2+4b}\,x-a\,x}}\right)+\sqrt{a^2+4\,b}\,
 \log \left(1-a\,x-b\,x^2\right)\right)}\over{2\,\sqrt{a^2+4\,b}\,
 \left(1-a\,x-b\,x^2\right)}}.
$$
We now find the generating function $U_5(x,a,b)$ for $H(n)F(n)$. We write down 
$$
H(n)F(n)=H(n-1)F(n)+\frac{1}{n}F(n).
$$
Whence 
$$
H(n)F(n)=[x^n]{{{{x\,d\,U_1(x,a,b)}\over{2\,d\,x}}}}+U_2(x,a,b).
$$
Then the generating function for the product $H(n)F(n)$ is
$$U_5(x,a,b)=-{{{{\frac{(2-a\,x)}{{\sqrt{4\,b+a^2}}}\,\log \left({{2-\sqrt{a^2+4b}\,x-a\,x}\over{2+\sqrt{a^2+4b}\,x-a\,x}}\right)}}+x
 \,\log \left(1-a\,x-b\,x^2\right)}\over{2\,\left(1-a\,x-b\,x^2\right)}}.
$$
For example, we obtain at $a=1$ and $b=1$ 
$$H(n)F_{1,1}(n)=[x^n]{{5\,x\,\log \left(1-x-x^2\right)+\sqrt{5}(2-x)\,\log \left(-{{\left(\sqrt{5}+1\right)\,x-2}\over{\left(
 \sqrt{5}-1\right)\,x+2}}\right)}\over{10(x^2+x-1)}}.
$$
The given sequence is written in A372199. The presented series converges at $x=\frac{1}{2}$. 
Hence it follows that
$$\sum_{n=1}^{\infty }{{{{\it H}\left(n\right)\,{F_1}\left(n
 \right)}\over{2^{n}}}}={{\frac{3}{\sqrt{5}}\log \left({{3+\sqrt{5}}\over{3-\sqrt{5}}}\right)+
\log 4}}.
$$
We obtain the generating function for the product of harmonic numbers and Pell numbers at $a=2$ and $b=1$
 $$H(n)F_{2,1}(n)=[x^n]{{2\,x\,\log(1-2x-x^2)+\sqrt{2}\left(1-x
 \right)\,\log \left({{1-\left(\sqrt{2}+1\right)\,x}\over{1+\left( \sqrt{2}-1\right)\,x}}\right)}\over{4\,\left(x^2+2\,x-1\right)}}.$$
The given sequence is written in A372210. The presented series converges at $x=\frac{1}{3}$.
Hence it follows that
$$\sum_{n=1}^{\infty }\frac{1}{3^n}H(n)F_{2,1}(n)=-\frac{3}{4}\left({{\sqrt{2}\,\log \left({{2-\sqrt{2}}\over{2+\sqrt{2}}}\right)+
 \log \left({{2}\over{9}}\right)}}\right).$$
The obtained generating functions $U_4(x,a,b)$ and $U_5(x,a,b)$ allow us to write down the generating function for the product $H(n)F(n+j+1)$, $j\ge 0$. For this purpose we use the identity (\ref{Equa1})
$$
H(n)F(n+j+1)=F(j+1)H(n)F(n+1)+b\,F(j)H(n)F(n).
$$
Whence the generating function has the expression
\begin{equation}
U_6(x,a,b)=F(j+1)U_5(x,a,b)+b\,F(j)U_4(x,a,b).
\end{equation}

\section{Generating functions for the product of generalized Fibonacci and Catalan numbers}
We consider the application of the obtained identity (\ref{LIden2}) at $r(n)=C(n)$. Where $C(n)$ are Catalan numbers
$$
C(n)=\frac{1}{n+1}\binom{2\,n}{n}.
$$
Then the identity is 
\begin{equation}\label{LCatId}
C(n+1)\,{F}(n+2\,m)=\sum_{i=0}^n C(i){L}(i+m)C(n-i){F}(n-i+m).
\end{equation}
We write down the known recurrence relation
$$
C(n+1)=\sum_{i=0}^n C(i)C(n-i), C(0)=1.
$$ 
We write down the result obtained by Barry and Mwafise \cite{Barry}
\begin{equation}\label{FBarry}
C(n)p(n+1)=[x^n]V_1(x,a,b)=[x^n]\frac{1}{x}\sqrt{\frac{1-2ax-\sqrt{1-4ax-16bx^2}}{(2(a^2+4b))}},
\end{equation}
where
$$
p(n)=[x^n]\frac{1}{1-ax-bx^2}.
$$
We write down the generating function
$$
V_2(x,a,b)=\sum_{n=0} C(n)\,{L}(n+1)x^n.
$$
\begin{corollary}
The generating function for the product of generalized Lucas and Catalan numbers of the form
$$
C(n)\,L(n+1)
$$
has the following expression
\begin{equation}\label{LGF1}
V_2(x,a,b)=\frac{1}{x}\left({{1-{{\sqrt{1-2\,a\,x+\sqrt{1-4\,a\,x-16\,b\,x^2}\over{{2}
 }}}}}}\right).
\end{equation}
\end{corollary}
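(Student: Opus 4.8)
The plan is to obtain $V_2$ directly, without routing through the functional equation $A=BC$, by the same diagonalization that underlies Barry and Mwafise's formula~(\ref{FBarry}): expand $L(n+1)$ in the characteristic roots of $1-a\,x-b\,x^2$ and then compose the ordinary Catalan generating function with those roots. First I would determine the ordinary generating function of the shifted sequence $L(n+1)$. Starting from $L(x)=\frac{2-a\,x}{1-a\,x-b\,x^2}$ and stripping off the constant term $L(0)=2$, a short computation gives
$$
\sum_{n=0}^{\infty} L(n+1)\,x^n=\frac{1}{x}\left(\frac{2-a\,x}{1-a\,x-b\,x^2}-2\right)=\frac{a+2\,b\,x}{1-a\,x-b\,x^2}.
$$
Factoring $1-a\,x-b\,x^2=(1-\beta_1 x)(1-\beta_2 x)$ with $\beta_{1,2}=\frac{a\pm\sqrt{a^2+4b}}{2}$, so that $\beta_1+\beta_2=a$ and $\beta_1\beta_2=-b$, the partial-fraction decomposition of the right-hand side collapses: using $a\,\beta_i+2b=\beta_i(\beta_i-\beta_{3-i})$ one finds the residues are exactly $\beta_1$ and $\beta_2$, which yields the Binet form $L(n+1)=\beta_1^{\,n+1}+\beta_2^{\,n+1}$.

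Next I would apply the elementary composition rule for the well-known Catalan generating function $\sum_{n}C(n)y^n=\frac{1-\sqrt{1-4y}}{2y}$: for any constant $\beta$ one has $\sum_{n}C(n)\beta^n x^n=\frac{1-\sqrt{1-4\beta x}}{2\beta x}$. Applying this term by term to $L(n+1)=\beta_1^{\,n+1}+\beta_2^{\,n+1}$, the prefactors $\beta_i$ cancel the $\beta_i$ in each denominator and give
$$
V_2(x)=\beta_1\sum_{n}C(n)(\beta_1 x)^n+\beta_2\sum_{n}C(n)(\beta_2 x)^n=\frac{2-\sqrt{1-4\beta_1 x}-\sqrt{1-4\beta_2 x}}{2x}.
$$

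The one delicate step, and where I expect all the real work to sit, is fusing the two nested radicals into the single radical of the statement. Writing $P=\sqrt{1-4\beta_1 x}$ and $Q=\sqrt{1-4\beta_2 x}$, Vieta's relations give $P^2+Q^2=2-4a\,x$ and, crucially, $PQ=\sqrt{(1-4\beta_1 x)(1-4\beta_2 x)}=\sqrt{1-4a\,x-16b\,x^2}$, since $(1-4\beta_1 x)(1-4\beta_2 x)=1-4(\beta_1+\beta_2)x+16\beta_1\beta_2 x^2$. Hence $(P+Q)^2=2-4a\,x+2\sqrt{1-4a\,x-16b\,x^2}$, so that $\frac{P+Q}{2}=\sqrt{\frac{1-2a\,x+\sqrt{1-4a\,x-16b\,x^2}}{2}}$, and substituting back turns the previous display into exactly~(\ref{LGF1}). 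Everything before this is bookkeeping; the whole identity hinges on the product of the inner square roots being precisely $\sqrt{1-4a\,x-16b\,x^2}$.

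As a cross-check that also ties the corollary back to the preceding material, this same radical $\frac{P+Q}{2}$ is the reciprocal of $V_1$ in~(\ref{FBarry}): its product with $x\,V_1(x)=\sqrt{\frac{1-2a\,x-\sqrt{1-4a\,x-16b\,x^2}}{2(a^2+4b)}}$ equals $x$, again by $(1-2a\,x)^2-(1-4a\,x-16b\,x^2)=4x^2(a^2+4b)$. This is reassuring, but I would present the direct diagonalization argument above as the proof, since it is self-contained and does not require first extracting $V_1$ from Barry's formula.
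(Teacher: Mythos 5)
Your proof is correct, but it takes a genuinely different route from the paper's. The paper treats (\ref{LGF1}) as an actual corollary of its main identity (\ref{LIden2}): taking $r(n)=C(n)$, $k=1$, $m=1$ there and using the Catalan recurrence $C(n+1)=\sum_{i=0}^n C(i)C(n-i)$ gives $C(n+1)F(n+2)=\sum_{i=0}^n C(i)L(i+1)C(n-i)F(n-i+1)$, which at the level of generating functions is the functional equation $V_1(x,a,b)=1+x\,V_1(x,a,b)\,V_2(x,a,b)$; solving, $V_2=\frac{1}{x}\left(1-\frac{1}{V_1}\right)$, and the stated closed form then follows by substituting Barry--Mwafise's expression (\ref{FBarry}) for $V_1$ and rationalizing $1/V_1$ via the same difference-of-squares identity $(1-2ax)^2-(1-4ax-16bx^2)=4x^2(a^2+4b)$ that appears in your cross-check. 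You instead bypass both the theorem and Barry's formula entirely: Binet form $L(n+1)=\beta_1^{n+1}+\beta_2^{n+1}$, composition of the Catalan generating function at each characteristic root, and denesting via $(P+Q)^2=2-4ax+2PQ$ with $PQ=\sqrt{1-4ax-16bx^2}$. All of your steps check out: the residues are indeed $\beta_1,\beta_2$, and $PQ$ is the unique square root of $(1-4\beta_1x)(1-4\beta_2x)$ with constant term $1$, so the fusing step is sound as formal power series even when the $\beta_i$ are complex conjugates. What your route buys: it is self-contained, and the same diagonalization re-derives $V_1$ and $V_3$, so the whole section could be made independent of \cite{Barry}. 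What the paper's route buys: it exhibits the statement as an application of the paper's central theorem, which is the structural point of placing it there as a corollary. One small caveat in yours: the distinct-roots partial-fraction step needs $a^2+4b\neq 0$, a degenerate case you would cover by a limiting argument; the paper's route shares this blind spot, since (\ref{FBarry}) divides by $a^2+4b$.
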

\begin{proof}
We obtain the recurrence equation based on the identity (\ref{LIden2}) at $m=1$
$$
C(n+1)\,{F}(n+2)=\sum_{i=0}^n C(i){\it L}(i+1)C(n-i){F}(n-i+1).
$$
Then we can write down the functional equation linking the generating functions $V_1(x,a,b)$ and $V_2(x,a,b)$
$$
V_1(x,a,b)=1+x\,V_1(x,a,b)\,V_2(x,a,b).
$$
Whence we find the explicit expression for the sought-for generating function
$$
V_2(x,a,b)=\frac{1}{x}\left(1-\frac{1}{V_1(x,a,b)}\right).
$$
\end{proof}

\begin{corollary}

The generating function for the product of generalized Fibonacci and Catalan numbers
$C(n)\,F(n)$
has the following expression
$$V_3(x,a,b)=\frac{1}{2bx}\left({{1-\sqrt{{{2\,b\,\sqrt{-16\,b\,x^2-4\,a\,x+1}+4\,a\,b\,x+2\,b+a^2
 }\over{4\,b+a^2}}}}}\right).
$$
\end{corollary}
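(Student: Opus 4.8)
The plan is to mirror the proof of the preceding corollary: produce a functional equation that ties the unknown series $V_3(x,a,b)=\sum_{n\ge 0}C(n)F(n)x^n$ to the already-known Barry--Mwafise series $V_1(x,a,b)=\sum_{n\ge 0}C(n)F(n+1)x^n$ from (\ref{FBarry}), and then solve it. Concretely, I claim the relation
\[
V_3=x\,V_1^2+b\,x\,V_3^2 ,
\]
so that everything reduces to verifying this coefficientwise and then inverting a quadratic.

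First I would read off the coefficient of $x^n$ on the right-hand side. Since $[x^n](x\,V_1^2)=\sum_{i+j=n-1}C(i)F(i+1)\,C(j)F(j+1)$ and $[x^n](b\,x\,V_3^2)=b\sum_{i+j=n-1}C(i)F(i)\,C(j)F(j)$, the right-hand coefficient equals $\sum_{i+j=n-1}C(i)C(j)\big[F(i+1)F(j+1)+b\,F(i)F(j)\big]$. The key step is to collapse the bracket: applying identity (\ref{Equa1}) with $n\mapsto i+1$ and $m\mapsto j$ gives $F(i+1)F(j+1)+b\,F(i)F(j)=F(i+j+1)$, and because $i+j=n-1$ this is simply $F(n)$, independent of $i,j$. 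Factoring $F(n)$ out and using the Catalan self-convolution $C(n)=\sum_{i+j=n-1}C(i)C(j)$ leaves $C(n)F(n)=[x^n]V_3$, which establishes the functional equation.

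With the functional equation in hand the remainder is routine. Solving $b\,x\,V_3^2-V_3+x\,V_1^2=0$ and selecting the branch that vanishes at $x=0$ yields
\[
V_3=\frac{1-\sqrt{1-4\,b\,x^2V_1^2}}{2\,b\,x}.
\]
Substituting the explicit $V_1$ from (\ref{FBarry}), so that $4\,b\,x^2V_1^2=\dfrac{2b\big(1-2ax-\sqrt{1-4ax-16bx^2}\big)}{a^2+4b}$, I would simplify $1-4\,b\,x^2V_1^2$ to $\dfrac{a^2+2b+4abx+2b\sqrt{1-4ax-16bx^2}}{a^2+4b}$, which is precisely the radicand appearing in the asserted closed form for $V_3(x,a,b)$.

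The main obstacle is the second paragraph: recognizing the correct functional equation and, above all, seeing that the mixed Fibonacci term $F(i+1)F(j+1)+b\,F(i)F(j)$ telescopes to $F(i+j+1)$ through (\ref{Equa1}). Once that collapse is spotted the Catalan recurrence finishes the identification, and the final radical manipulation is only bookkeeping, together with a branch check so that $V_3(0)=0$ and $[x^1]V_3=C(1)F(1)=1$.
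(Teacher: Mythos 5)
Your proof is correct, but it takes a genuinely different route from the paper. The paper never forms a new functional equation for $V_3$: it simply writes the linear relation $L(n+1)=aF(n+1)+2bF(n)$, multiplies by $C(n)$, and sums to get $V_2=aV_1+2bV_3$, so that $V_3=\frac{1}{2b}\left(V_2-aV_1\right)$ follows from the already-derived $V_2$ of the preceding corollary together with Barry--Mwafise's $V_1$. You instead bypass $V_2$ (and hence the preceding corollary and the paper's Theorem~5 machinery) entirely, deriving the quadratic equation $V_3=xV_1^2+b\,xV_3^2$ directly from the Kalman--Mena addition formula (\ref{Equa1}) in the collapsed form $F(i+1)F(j+1)+bF(i)F(j)=F(i+j+1)$ together with the Catalan recurrence, and then solving for $V_3$ with the branch fixed by $V_3(0)=0$; I checked that your radical simplification of $1-4bx^2V_1^2$ reproduces exactly the radicand in the stated formula, and that the functional equation holds coefficientwise (including the trivial $n=0$ case). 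Each approach has its merits: the paper's argument is a two-line linear elimination once $V_1$ and $V_2$ are known, while yours is self-contained given only $V_1$, produces the convolution identity $C(n)F(n)=\sum_{i+j=n-1}C(i)C(j)\left[F(i+1)F(j+1)+bF(i)F(j)\right]$ as a byproduct of independent interest, and avoids any reliance on the Lucas-number series --- at the modest cost of solving a quadratic and justifying a branch choice (both routes implicitly assume $b\neq 0$).
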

 
\begin{proof}
By definition we have
$$
L(n+1)=F(n+2)+b\,F(n)=a\,F(n+1)+2\,b\,F(n).
$$
Then
$$
C(n)L(n+1)=a\,C(n)F(n+1)+2\,b\,F(n)C(n)
$$
$$
V_2(x,a,b)=a\,V_1(x,a,b)+2\,b\,V_3(x,a,b)
$$

$$
V_3(x,a,b)=\frac{1}{2\,b}\left(V_2(x,a,b)-a\,V_1(x,a,b)\right).
$$
Hence knowing the generating functions for the right part terms of the expression we obtain the sought-for expression.
\end{proof}

Then for the product of Fibonacci and Catalan numbers the generating function is
$$
F_1(n)C(n)=[x^n]\frac{1}{2\,x}\left({{1-{{\sqrt{2\,\sqrt{1-4\,x-16\,x^2}+4\,x+3}}\over{
 \sqrt{5}}}}}\right).
$$
We write the generating function in A119694. At $x=\frac{1}{8}$, the series converges. Then
$$
\sum_{n=1}^{\infty} \frac{1}{2^{3n}}C(n)F_1(n)=4\,\left(1-{{3}\over{\sqrt{10}}}\right).
$$
For the product of Pell and Catalan numbers the generating function is 
$$
F_2(n)C(n)=[x^n]\frac{1}{2\,x}\left({{1-{{\sqrt{2\,\sqrt{1-8\,x-16\,x^2}+8\,x+6}}\over{2\sqrt{2}}}}}\right).
$$
We write the sequence in A372216. At $x=\frac{1}{16}$ the series converges.
Then
$$\sum_{n=1}^{\infty} \frac{1}{2^{4n}}C(n)F_2(n)=8-2\,\sqrt{13+\sqrt{7}}.$$

The generating functions $V_1(x,a,b)$ and $V_3(x,a,b)$ allow us to write down the generating function for the product $C(n)F(n+j+1)$, $j\ge 0$. For this purpose we use the identity (\ref{Equa1}).
Whence the generating function has the expression 
\begin{equation}
V_4(x,a,b)=F(j+1)V_1(x,a,b)+b\,F(j)V_3(x,a,b).
\end{equation}

\subsection{Generating functions for multisections product  of generalized Fibonacci and Lucas numbers, Catalan and harmonic numbers } 

The generating functions for multisections of Fibonacci and Lucas numbers are known \cite{Shallit,Gessel}. We find the expression of the generating function for multisections of generalized Fibonacci numbers of the form $F(m\,n+j)$. For this purpose we prove the following proposition
\begin{proposition}
For multisections of generalized Fibonacci numbers $F(m\,n+j)$, $m,n,j\in N$, $a,b\in R$ the generating function is
\begin{equation}\label{MsecFib}
Q(x)={{{F}(j)+(-b)^{j}\,{F}(m-j)\,x
 }\over{1-L(m)\,x+(-b)^{m}\,x^2}},
\end{equation}
where $L(m)$ is generalized  Lucas number with parameters $a,b$.
\end{proposition}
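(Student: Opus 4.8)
The plan is to treat $Q(x)=\sum_{n\ge 0}F(mn+j)\,x^n$ as an ordinary generating function and to read the denominator $1-L(m)x+(-b)^m x^2$ directly off the multiple-angle recurrence (\ref{Prop3}). Writing $g_n=F(mn+j)$, the recurrence (\ref{Prop3}) states that $g_n=L(m)\,g_{n-1}-(-b)^m g_{n-2}$ for $n\ge 2$. I would multiply this relation by $x^n$, sum over $n\ge 2$, and re-express each of the three resulting sums in terms of $Q(x)$ and its first two coefficients $g_0=F(j)$ and $g_1=F(m+j)$. This yields the functional equation
$$
\bigl(1-L(m)x+(-b)^m x^2\bigr)Q(x)=g_0+\bigl(g_1-L(m)g_0\bigr)x,
$$
so that the denominator of $Q(x)$ is exactly the claimed quadratic and the numerator is $F(j)+\bigl(F(m+j)-L(m)F(j)\bigr)x$.

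The remaining task, which is where the real work lies, is to identify the linear coefficient of the numerator, i.e.\ to prove
$$
F(m+j)-L(m)F(j)=(-b)^j F(m-j).
$$
I would first replace $F(m+j)$ using Proposition (\ref{Prop1}) at $n=j$, namely $2F(m+j)=F(m)L(j)+L(m)F(j)$, which reduces the claim to the symmetric Lucas--Fibonacci identity
$$
F(m)L(j)-L(m)F(j)=2(-b)^j F(m-j).
$$
Expanding $L(j)=F(j+1)+bF(j-1)$ and $L(m)=F(m+1)+bF(m-1)$ turns the left-hand side into the combination $\bigl(F(m)F(j+1)-F(m+1)F(j)\bigr)+b\bigl(F(m)F(j-1)-F(m-1)F(j)\bigr)$ of two cross-differences, each of which has matching index sums and is therefore controlled by Johnson's identity (\ref{Prop2}).

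Applying (\ref{Prop2}) with $r=j$ to the first bracket (using $F(1)=1$, $F(0)=0$) gives $(-b)^j F(m-j)$, and applying it with $r=j-1$ to the second bracket gives $-(-b)^{j-1}F(m-j)$; combining the two, together with the factor $b$, produces $2(-b)^j F(m-j)$, as required. I expect the main obstacle to be bookkeeping rather than anything conceptual: one must keep the sign conventions such as $(-b)^{j-1}=-(-1)^{j}b^{j-1}$ straight, verify the boundary terms at $n=0,1$ in the generating-function manipulation, and remember the convention $F(k)=0$ for $k\le 0$, which also guarantees the formula behaves correctly in the degenerate range $j\ge m$ (where the case $j=0$ is best checked separately).
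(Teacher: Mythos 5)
Your proof is correct, but it takes a genuinely different route from the paper's. The paper never writes the recurrence for $F(mn+j)$ at general $j$: it reads the generating function of the pure multisection $F(mn)$ off the multiple-angle recurrence (\ref{Prop3}) at $j=0$, and then handles the shift by $j$ with Proposition (\ref{Prop4}) (with $n$ replaced by $mn$), which expresses $F(mn+j)$ as $\frac{F(j)}{F(m)}F(m(n+1))+\frac{(-b)^j F(m-j)}{F(m)}F(mn)$, so the answer falls out as a linear combination of two already-known rational functions. You instead run the standard recurrence-to-rational-generating-function argument on $g_n=F(mn+j)$ itself, which yields the denominator immediately but forces you to identify the numerator from initial values, i.e., to prove $F(m+j)-L(m)F(j)=(-b)^j F(m-j)$; your derivation of this via (\ref{Prop1}) and two applications of Johnson's identity (\ref{Prop2}) checks out, signs included. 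Note that your numerator identity is precisely the $n=1$ instance of the paper's decomposition, since $F(2m)=F(m)L(m)$ by (\ref{LF2}), so the two arguments rest on the same underlying fact packaged differently. Your route has one concrete advantage: it never divides by $F(m)$, which the paper's proof does implicitly and which can vanish for real parameters (e.g., $F_{1,-1}(3)=0$); the paper's route is shorter given its stock of earlier propositions.

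One caveat: your parenthetical claim that the formula ``behaves correctly in the degenerate range $j\ge m$'' is false for $j>m$. Take $a=b=1$, $m=2$, $j=3$: the formula gives $2/(1-3x+x^2)$, whose coefficient of $x$ is $6$, whereas $F(2\cdot 1+3)=F(5)=5$. The convention $F(k)=0$ for $k\le 0$ does not rescue this, because Johnson-type identities (and hence your key identity) break down once intermediate indices drop below $1$. Like the paper's statement, your argument is valid exactly in the range $0\le j\le m$ (with $j=0$ checked separately, as you note).
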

\begin{proof}
We consider the obtained multiple-angle recurrence (\ref{Prop3}) at $j=0$
$$
F(m\,n)=L(m)F(m(n-1))-(b)^m\,F(m(n-2)).
$$
We easily see the generating function is
$$
F(m\,n)=[x^n]\frac{F(m)x}{1-L(m)x+(-b)^m\,x^2}.
$$
We now use the identity (\ref{Prop3}). Then we select $F(n+j)$ in the left part of $F(n+j)$ and replace the argument $n$ by $m\,n$
$$
F(mn+j)=\frac{F(j)}{(F(m)}F(m(n+1))+\frac{(-b)^j\,F(m-j)}{F(m)}F(mn).
$$
Whence
$$
F(mn+j)=[x^n]\frac{F(j)}{(F(m)}\frac{F(m)x}{x(1-L(m)x+(-b)^m\,x^2)}+\frac{(-b)^j\,F(m-j)}{F(m)}\frac{F(m)x}{(1-L(m)x+(-b)^m\,x^2)}.
$$
Then we obtain the sought-for formula after simplification.
\end{proof}

We use the relation between generalized Fibonacci and Lucas numbers. We obtain the generating function for generalized Lucas numbers
\begin{equation}\label{MsecLuc}
Q_L(x)={{{L}(j)-(-b)^{j}\,{L}(m-j)\,x
 }\over{1-L(m)\,x+(-b)^{m}\,x^2}}.
\end{equation}

We note that at $j=0$ multisection of generalized Fibonacci numbers of the form $F(m\,n)$ belongs to generalized Fibonacci numbers with parameters $a=L(m)$, $b=-(-b')^m$, where $a'$ and $b'$ are parameters of the sequence $F(n)$ generating the multisection, $L(m)$ are generalized Lucas numbers with parameters $a'$ and $b'$. Then we can use all identities valid for generalized Fibonacci numbers for multisections of the form $F(m\,n)$. Then we can use the function $U_5(x,a,b)$ to determine the generating functions of multisections product of generalized Fibonacci numbers of the form $F(m\,n)$ and harmonic numbers $H(n)$. We can use the function $V_3(x,a,b)$ to determine the generating functions of multisections product of generalized Fibonacci numbers of the form $F(m\,n)$ and Catalan numbers $H(n)$. We cannot use the proposed approach for multisections of generalized Lucas numbers since $L(0)\ne 0$). 
We consider some examples.\\
1. Let $a=2$ $b=1$ $F_{2,1}(n)$ are Pell numbers, $j=0$ and $m=2$. Then $F_{2,1}(2)=2$, $L_{2,1}(2)=6$, the generating function for bisection of Pell numbers is
$$
F_{2,1}(2n)=[x^n]\frac{2x}{1-6x-x^2}.
$$
Whence
$$
H(n)F_{2,1}(2n)=[x^n]2\,U_5(x,6,-1),
$$
$$
C(n)F_{2,1}(2n)=[x^n]2\,V_3(x,6,-1).
$$
2. Let $a=1$, $b=2$, $F_{1,2}(n)$ are Jacobsthal numbers, $j=0$ and $m=3$. Then $F_{1,2}(3)=3$, $L_{2,1}(3)=7$ the generating function for trisection of Jacobstal numbers is
$$
F_{2,1}(3\,n)=[x^n]\frac{3x}{1-7x-8x^2}.
$$
Whence
$$
H(n)F_{1,2}(3\,n)=[x^n]3\,U_5(x,7,8)
$$
$$
C(n)F_{1,2}(3\,n)=[x^n]3\,V_3(x,7,8)
$$
We consider obtaining generating functions for the product $H(n)F(mn+j)$ and $C(n)F(mn+j)$ at $0\leqslant j<m$. For this purpose we use the identity (\ref{Prop4}) and replace the variable $n$ by $nm$. We obtain
$$
F(m\,n+j)F(m)-F(j)F(m\,n+m)=(-1)^j\,b^jF(m\,n)F(m-j).
$$
Then
$$
F(m\,n+j)=\frac{F(j)}{F(m)}F(m(n+1)+\frac{(-1)^j\,b^j\,F(m-j)}{F(m)}F(m\,n).
$$
We multiply the left and right parts by $H(n)$
$$
H(n)F(m\,n+j)=\frac{F(j)}{F(m)}H(n)F(m(n+1)+\frac{(-1)^j\,b^j\,F(m-j)}{F(m)}H(n)F(m\,n).
$$
We now proceed to the generating functions
$$
H(n)F(m(n+1))=[x^n]F(m)U_4(x,L(m),-(-b)^m),
$$
and
$$
H(n)F(mn)=[x^n]F(m)U_5(x,L(m),-(-b)^m).
$$
After simplification we obtain
$$
H(n)F(m\,n+j)=[x^n]F(j)U_4(x,L(m),-(-b)^m)+(-b)^j\,F(m-j)U_5(x,L(m),-(-b)^m).
$$
We perform similar actions for the product $C(n)F(mn+j)$ and write down the generating function
$$
C(n)F(m\,n+j)=[x^n]F(j)V_1(x,L(m),-(-b)^m)+(-b)^j\,F(m-j)V_3(x,L(m),-(-b)^m).
$$
We use the formula for generalized Lucas numbers
$$
L(m\,n+j)=F(m\,n+j+1)+b\,F(m\,n+j-1)
$$
After simplification we obtain
$$
H(n)L(m\,n+j)=[x^n]L(j)U_4(x,L(m),-(-b)^m)-(-b)^j\,L(m-j)U_5(x,L(m),-(-b)^m)
$$
and
$$
C(n)L(m\,n+j)=[x^n]L(j)V_1(x,L(m),-(-b)^m)-(-b)^j\,L(m-j)V_3(x,L(m),-(-b)^m).
$$
We consider the examples. \\
1. We specify Jacobsthal numbers $a=1$, $b=2$ $j=3$, $m=4$. Then the generating function for the product $H(n)F_{1,2}(4n+1)$ is
$$
H(n)F_{1,2}(4n+3)=[x^n]{{\left(8\,x+7\right)\,\log \left({{x-1}\over{16\,x-1}}\right)+\left(24\,x-9\right)\,\log \left(16\,x^2-17\,x+1\right)}\over{96\,x^
 2-102\,x+6}}.
$$
2. We specify the sequence of numbers \seqnum{A010892}, $m=4$, $j=1$, $a=1$, $b=-1$. Then the product $С(n)F_{1,-1}(4n+1)$ has the generating function 
$$C(n)F_{1,-1}(4n+1)=[x^n]{{\sqrt{\sqrt{16\,x^2+4\,x+1}-2\,x-1}}\over{\sqrt{6}\,x}}.
$$
3. We specify the sequence of Jacobsthal-Lucas numbers \seqnum{A014551}, $m=4$, $j=1$, $a=1$, $b=-1$. Then the product $H(n)L_{1,2}(2n)$ has the generating function

$$H(n)L_{1,2}(2n)=[x^n]{{3\,x\,\log \left({{x-1}\over{4\,x-1}}\right)+\left(5\,x-2\right)
 \,\log \left(4\,x^2-5\,x+1\right)}\over{8\,x^2-10\,x+2}}.
$$
4. We specify the sequence \seqnum{A002249}, $a=1$, $b=-2$, $j=1$, $m=3$. Then the product $C(n)L_{1,-2}(2n)$ has the generating function
$$C(n)L_{1,-2}(3n+1)=[x^n]{{\sqrt{\sqrt{128\,x^2+16\,x+1}-8\,x-1}}\over{\sqrt{8}\,x}}.$$

\section{Conclusion}

1. We wrote down the identities for generalized Fibonacci and Lucas numbers.\\
2. We obtained the new identity for generalized Fibonacci and Lucas numbers
$$
p(n)\,{F}(k\,n+2\,m)=\sum_{i=0}^n r(i){L}(k\,i+m)r(n-i){F}(k(n-i)+m),
$$
where
$$
p(n)=\sum_{i=0}^n r(i)r(n-i).
$$ 
3. We obtained the generating functions for the products of the form
$$
H(n)F(n+1)=[x^n]U_4(x,a,b),
$$  
$$
H(n)F(n)=[x^n]U_5(x,a,b),
$$  
$$
C(n)F(n)=[x^n]V_3(x,a,b).
$$  
4. We obtained the generating functions for the products of the form
 $$H(n)F(n+j+1)=[x^n]F(j+1)U_5(x,a,b)+b\,F(j)U_4(x,a,b),$$
 $$C(n)F(n+j+1)=[x^n]F(j+1)V_1(x,a,b)+b\,F(j)V_3(x,a,b).$$
5. We obtained the generating functions for multisections of generalized Fibonacci and Lucas numbers
$$
Q_F(x)={{{F}(j)+(-b)^{j}\,{F}(m-j)\,x
 }\over{1-L(m)\,x+(-b)^{m}\,x^2}},
$$
$$
Q_L(x)={{{L}(j)-(-b)^{j}\,{L}(m-j)\,x
 }\over{1-L(m)\,x+(-b)^{m}\,x^2}}.
$$
6. We obtained the generating functions for multisections product of generalized Fibonacci and Lucas numbers, Catalan and harmonic numbers
$$
H(n)F(m\,n+j)=[x^n]F(j)U_4(x,L(m),-(-b)^m)+(-b)^j\,F(m-j)U_5(x,L(m),-(-b)^m),$$
$$
C(n)F(m\,n+j)=[x^n]F(j)V_1(x,L(m),-(-b)^m)+(-b)^j\,F(m-j)V_3(x,L(m),-(-b)^m),$$
$$
H(n)L(m\,n+j)=[x^n]L(j)U_4(x,L(m),-(-b)^m)-(-b)^j\,L(m-j)U_5(x,L(m),-(-b)^m),$$
$$
C(n)L(m\,n+j)=[x^n]L(j)V_1(x,L(m),-(-b)^m)-(-b)^j\,L(m-j)V_3(x,L(m),-(-b)^m). 
$$

\end{document}